\newcommand{\ls}{\lesssim}
\newcommand{\ci}[1]{_{{}_{\scriptstyle{#1}}}}
\newcommand{\Be}{\begin{equation}}
\newcommand{\Ee}{\end{equation}}
\newcommand{\Bm}{\begin{multline}}
\newcommand{\Em}{\end{multline}}
\newcommand{\Bea}{\begin{eqnarray}}
\newcommand{\Eea}{\end{eqnarray}}
\newcommand{\Beas}{\begin{eqnarray*}}
\newcommand{\Eeas}{\end{eqnarray*}}
\newcommand{\Benu}{\begin{enumerate}}
\newcommand{\Eenu}{\end{enumerate}}
\newcommand{\Bi}{\begin{itemize}}
\newcommand{\Ei}{\end{itemize}}
\def\XXint#1#2#3{{\setbox0=\hbox{$#1{#2#3}{\int}$ }
\vcenter{\hbox{$#2#3$ }}\kern-.5\wd0}}
\def\intslash{\fint}
\def\qsl{{\rlap{\kern  .32em $\mspace {.5mu}\backslash$ }\int_{Q_x}}}
\def\Re{\operatorname{Re\,}}
\def\emph#1{{\it #1 }}
\def\diam{{\text{\rm  diam}}}
\def\cf{{\it cf}}
\def\supp{{\text{\rm supp}}}
\def\inn#1#2{\langle#1,#2\rangle}
\def\noi{\noindent}
\def\lc{\lesssim}
\def\eps{\varepsilon}
\def\om{\omega}
\def\fS{{\mathfrak {S}}}
\def\bbN{{\mathbb {N}}}
\def\bbR{{\mathbb {R}}}
\def\cB{{\mathcal {B}}}
\def\cE{{\mathcal {E}}}
\def\cG{{\mathcal {G}}}
\def\cK{{\mathcal {K}}}
\def\cL{{\mathcal {L}}}
\def\cM{{\mathcal {M}}}
\def\cN{{\mathcal {N}}}
\def\cR{{\mathcal {R}}}
\def\cS{{\mathcal {S}}}
\def\cT{{\mathcal {T}}}
\def\cU{{\mathcal {U}}}
\def\be#1{\begin{equation}\label{ #1}}
\def\endeq{\end{equation}}
\def\endal{\end{align}}
\def\bas{\begin{align*}}
\def\eas{\end{align*}}
\def\bi{\begin{itemize}}
\def\ei{\end{itemize}}
\def\eps{\varepsilon}
\def\emph#1{{\it #1}}
\def\textbf#1{{\bf #1}}
\theoremstyle{plain}
   \newtheorem{theorem}{Theorem}[section]
   \newtheorem{lemma}[theorem]{Lemma}
   \newtheorem{corollary}[theorem]{Corollary}
   \newtheorem{theorem*}{Theorem}
\theoremstyle{remark}
\theoremstyle{definition}
\renewcommand{\le}{\leqslant}
\renewcommand{\ge}{\geqslant}
\begin{document}

\title[A Calder\'on--Zygmund estimate with applications]{A Calder\'on--Zygmund estimate with applications to generalized Radon transforms and Fourier integral operators}

\author[M. Pramanik   \ \ K. Rogers   \ \  A. Seeger]
{Malabika Pramanik  \ \ Keith M. Rogers \  \ Andreas Seeger}

\address{Malabika Pramanik\\Department of Mathematics\\University of British Columbia\\Room 121, 1984 Mathematics Road\\Vancouver, B.C., Canada V6T 1Z2} \email{malabika@math.ubc.ca}

\address{Keith Rogers \\
Instituto de Ciencias Matematicas
CSIC-UAM-UC3M-UCM \\
Madrid 28049, Spain} \email{keith.rogers@icmat.es}

\address{Andreas Seeger   \\Department of Mathematics\\ University of Wisconsin-Madison\\Madison, WI 53706, USA}\email{seeger@math.wisc.edu}

\subjclass{42B20, 42B35, 35S30}
\keywords{Regularity of integral operators, Radon transforms, singular integrals, Fourier integral operators, Triebel-Lizorkin spaces}

\begin{thanks} {M.P. supported  in part by NSERC grant 22R82900.
K.R. supported in part by MEC grant
 MTM2007-60952. A.S. supported in part by NSF grant 0652890.}
\end{thanks}

\begin{abstract} We prove a Calder\'on--Zygmund type estimate which
can be applied to sharpen known  regularity results on spherical means,
Fourier integral operators, generalized Radon transforms and singular oscillatory integrals.
\end{abstract}
\maketitle

The main theme in this paper is to strengthen various sharp
$L^p$--Sobolev regularity results for integral operators. To
illustrate this we consider the example of spherical means.

Let $\sigma$ denote  surface measure on the unit sphere. Since
$$|\widehat \sigma(\xi)|\le C(1+|\xi|)^{-\frac{d-1}2} $$  the
convolution operator $f\mapsto f*\sigma$ maps $L^2$ to the Sobolev
space $L^2_{(d-1)/2}$. By complex interpolation with an
$L^\infty$--BMO--estimate,
 Fefferman and Stein
\cite{FS} proved that
 the
operator maps $L^p$ to $L^p_{(d-1)/p}$ for $2<p<\infty$; here the
regularity parameter $\alpha=(d-1)/p$ is optimal. It turns out, however,
that  the
$L^p$--Sobolev result can be improved within the scale of
Triebel--Lizorkin  spaces \cite{triebel} in two ways.

We recall the  definition of the quasinorm $$\|f\|_{F^p_{\alpha,q}}=
\Big\|\Big(\sum_{k=0}^\infty2^{k\alpha q}|\Pi_k
f|^q\Big)^{1/q}\Big\|_p$$ which we will use for $1<p<\infty$ and
$0<q<\infty$. Here the operators $\Pi_k$ are defined by the standard
smooth Littlewood--Paley cutoffs, so that $\widehat{\Pi_k f}$ is
supported in $\{2^{k-1}\le |\xi|\le 2^{k+1}\}$ for $k>1$ and in a
neighborhood of the origin for $k=0$; we assume that
$\sum_{k=0}^\infty \Pi_kf =f$ for all Schwartz functions. It is well
known, and immediate from Littlewood--Paley theory and embeddings
for sequence spaces, that $L^{p}\subset F^p_{0,p}\equiv B^p_{0,p}$,
$2\le p<\infty$ and, for all $p\in (1,\infty)$, $F^p_{\alpha, r}
\subset F^p_{\alpha, s} \subset F^p_{\alpha,2}= L^p_\alpha$ if
$0<r\le s\le 2$. Thus the inequalities
\Be\label{spherical}\|f*\sigma\|\ci{F^{p}_{\frac{d-1}{p},r} }\le
C_{p,r}
  \|f\|\ci{F^p_{0,p}}, \quad r>0,
\quad
  2<p<\infty,
\Ee strengthen the standard regularity result. The case  $r=1$ also
implies an $F^{p}_{0,\infty}\to F^{p}_{\alpha,p}$ estimate for
  $1<p<2$ and $\alpha=(d-1)/p'$,
by duality and composition with Bessel derivatives
$(I-\Delta)^{\alpha/2}$. Related phenomena have recently been
observed in articles on  space--time (or local smoothing) estimates
for Schr\"odinger equations \cite{rs} and wave equations \cite{hns}.

In \S\ref{czsect} we formulate a general result  which covers the
 spherical means and many other related applications. These are
 discussed
in
 \S\ref{applsect}.

\medskip

\section{A Calder\'on--Zygmund estimate}\label{czsect}
For each $k\in \bbN$, we consider operators $T_k$ defined on the
Schwartz functions $\cS(\bbR^d)$ by
$$T_kf(x)=\int K_k(x,y) f(y) dy,$$
where each $K_k$ is a continuous and bounded  kernel
(this qualitative assumption is made to
avoid measurability questions).
Let $\zeta\in \cS(\bbR^d)$. Define  $\zeta_k= 2^{kd}\zeta(2^k\cdot)$
and $$P_k f=\zeta_k*f.$$
In applications the   operators $P_k$ often arise from
  dyadic frequency decompositions, however we emphasize that no cancellation
condition on $\zeta$ is needed in the following result.

\begin{theorem}\label{czthm}
Let  $0<a<d$, $\eps>0$, and $1<q<p<\infty$. Assume the operators
$T_k$  satisfy \Be\label{Lphyp} \sup_{k>0} 2^{ka/p}\|T_k\|_{L^p\to
L^p} \le A \,. \Ee\Be\label{Lqhyp}\sup_{k>0} 2^{ka/q}\|T_k\|_{L^q\to
L^q} \le B_0\,. \Ee Furthermore let $\Gamma\ge 1$, and  assume  that
for each cube $Q$  there is a measurable set $\cE_Q$ so that
\Be\label{excset}  |\cE_Q| \le \Gamma \max\{ |Q|^{1-a/d} , |Q| \},
\Ee and for every $k\in \bbN$ and   every cube $Q$ with $2^k
\diam(Q)\ge 1$, \Be\label{Linftyhyp}
 \sup_{x\in Q}\int_{\bbR^d\setminus \cE_Q}|K_k(x,y)| \,dy \le
B_1 \max\big\{ \big(2^k\diam(Q)\big)^{-\eps}, 2^{-k\eps} \big\}.
\Ee Let \Be\label{cBdef} \cB:=
B_0^{q/p}(A\Gamma^{1/p}+B_1)^{1-q/p}.\Ee
 Then
there is a $C$ (depending only on $d,\zeta,a,\eps,p,q,r$)
 so
that \Be\label{concl}
\Big\| \Big(\sum_k 2^{k ar/p} | P_k T_k f_k|^r\Big)^{1/r} \Big\|_p
\le C
 A
\Big[\log\big(3 +\frac{\cB}{A}\big)\Big]^{1/r-1/p}
\Big(\sum_k\|f_k\|_p^p \Big)^{1/p}. \Ee
\end{theorem}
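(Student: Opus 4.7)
The plan is to derive \eqref{concl} by combining an elementary $L^p(\ell^p)$ bound (with constant $A$ and no logarithm) with a Calder\'on--Zygmund-based weak-type $L^p\to L^{p,\infty}(\ell^\infty)$ bound (with constant $\cB$), and then pass to the intermediate $L^p(\ell^r)$ norm via a pointwise interpolation that produces the logarithmic factor $[\log(3+\cB/A)]^{1/r-1/p}$. The easy endpoint follows directly from \eqref{Lphyp}, since convolution with $\zeta_k$ is uniformly bounded on $L^p$:
\begin{equation*}
\Bigl\|\Bigl(\sum_k 2^{ka}|P_kT_kf_k|^p\Bigr)^{1/p}\Bigr\|_p^p = \sum_k 2^{ka}\|P_kT_kf_k\|_p^p \le C\,A^p \sum_k \|f_k\|_p^p,
\end{equation*}
and this already gives \eqref{concl} (without a logarithm) whenever $r\ge p$. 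The content is thus the weak-type endpoint
\begin{equation*}
\bigl|\{x:\sup_k 2^{ka/p}|P_kT_kf_k(x)|>\lambda\}\bigr|\le C\,\lambda^{-p}\,\cB^p\sum_k\|f_k\|_p^p, \qquad \lambda>0.
\end{equation*}

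For the weak-type inequality I would normalize $\sum_k\|f_k\|_p^p=1$ and, for each $k$, apply a Calder\'on--Zygmund decomposition of $f_k$ at the scale-dependent height $\lambda_k:=\lambda\,2^{-ka/p}$, writing $f_k=g_k+\sum_{Q\in\cQ_k}b_{k,Q}$ with $\|g_k\|_\infty\le C\lambda_k$, each $b_{k,Q}$ supported in $Q$, and $\sum_{Q\in\cQ_k}|Q|\le\lambda_k^{-p}\|f_k\|_p^p$. The good-part contribution is handled via hypothesis \eqref{Lqhyp}: interpolating $\|g_k\|_\infty\le C\lambda_k$ with $\|g_k\|_p\le\|f_k\|_p$ gives $\|g_k\|_q^q\lesssim\lambda_k^{q-p}\|f_k\|_p^p$, and Chebyshev then contributes a factor of order $B_0^{q/p}$ to $\cB$. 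For the bad part, set $E:=\bigcup_k\bigcup_{Q\in\cQ_k}(Q^\star\cup\cE_Q)$; the measure of $E$ is controlled by \eqref{excset} together with a H\"older step converting $\sum_Q|Q|^{1-a/d}$ into $(\sum_Q|Q|)^{1-a/d}$ times a combinatorial factor, absorbing the contribution $A\Gamma^{1/p}$ into $\cB$. Off $E$, the pointwise bound \eqref{Linftyhyp} combined with $\|b_{k,Q}\|_1\lesssim\lambda_k|Q|$ yields summable decay in both $k$ and $\diam Q$, giving the $B_1$ contribution. Adding the three pieces yields the weak-type endpoint with the combined constant $\cB$ of \eqref{cBdef}.

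To deduce \eqref{concl} from the two endpoint bounds, set $a_k(x):=2^{ka/p}|P_kT_kf_k(x)|$, $M(x):=\sup_k a_k(x)$ and $S(x):=(\sum_k a_k(x)^p)^{1/p}$; the two estimates control $M$ in weak $L^p$ with constant $\cB$ and $S$ in $L^p$ with constant $A$. Decomposing the index set at each $x$ into dyadic annuli $\{k: a_k(x)\in(2^{-j-1}M(x),2^{-j}M(x)]\}$ for $0\le j\le J$, bounding the cardinality of each annulus by $C\,4^j\,(S(x)/M(x))^p$ via Chebyshev, and summing via the H\"older embedding $\ell^p\hookrightarrow\ell^r$ on sets of bounded cardinality produces the pointwise inequality $(\sum_k a_k(x)^r)^{1/r}\le C\,J^{1/r-1/p}\,S(x)$, up to an admissible tail controlled by $M(x)$; choosing $J\sim\log(3+\cB/A)$ and integrating in $x$ yields \eqref{concl}. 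The principal obstacle is the absence of cancellation for $P_k$: because $\zeta$ is not required to have vanishing moments, one cannot exploit mean-zero cancellation of $b_{k,Q}$ against the kernel of $P_kT_k$, and the mere $L^1$ bound on $b_{k,Q}$ must instead be paired with the off-diagonal decay \eqref{Linftyhyp} outside an exceptional set whose measure is controlled only fractionally by \eqref{excset}. It is exactly this balance -- between the fractional exceptional set measure and the pointwise $L^1$-scale estimate -- that forces the combination $A\Gamma^{1/p}+B_1$ in the definition of $\cB$, and that imposes careful bookkeeping of the two regimes $|Q|\ge 1$ versus $|Q|<1$ (equivalently $2^k\diam Q\sim 1$ versus $\gg 1$) appearing in both \eqref{excset} and \eqref{Linftyhyp}.
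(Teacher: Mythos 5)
Your first endpoint (the $L^p(\ell^p)$ bound with constant $A$) is exactly what the paper uses to dispose of the trivial case $r\ge p$, and the plan to extract the logarithm from an interplay with a second, endpoint-type bound with constant $\cB$ is in the right spirit. But the bridge you propose between the two — the pointwise interpolation inequality $\bigl(\sum_k a_k(x)^r\bigr)^{1/r}\le C\,J^{1/r-1/p}S(x)$ "up to an admissible tail controlled by $M(x)$" — does not hold, and in fact no abstract interpolation of the form $\|S\|_{L^p}\le A$, $\|M\|_{L^{p,\infty}}\le\cB$ $\Rightarrow$ $\|(\sum_k a_k^r)^{1/r}\|_p\lesssim A[\log(3+\cB/A)]^{1/r-1/p}$ can be true. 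Take $a_k(x)=k^{-1/r}\mathbf 1_{[0,1]}(x)$: then $M\equiv 1$ on $[0,1]$, $S=(\sum_k k^{-p/r})^{1/p}<\infty$ since $p>r$, yet $(\sum_k a_k^r)^{1/r}=(\sum_k k^{-1})^{1/r}=\infty$. The mechanism also fails at the level you describe: the dyadic annuli $\{k:a_k(x)\in(2^{-j-1}M,2^{-j}M]\}$ have cardinalities $N_j(x)$ that are only controlled by Chebyshev as $N_j\lesssim 2^{jp}(S/M)^p$, which grows geometrically in $j$; H\"older over these annuli therefore produces a factor exponential in $J$, not polynomial, and the "tail" over $a_k\le 2^{-J}M$ is an unbounded sum with no cardinality control at all. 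The logarithm is not a consequence of a two-endpoint real-interpolation scheme; it must come from somewhere that genuinely caps the number of participating scales.

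That is precisely what the paper's proof supplies and your reduction discards. The paper does not prove a weak-type endpoint at all; it runs the Fefferman--Stein sharp-maximal inequality on $\sum_k 2^{kar/p}|P_kT_kf_k|^r$ and, for each $x$, splits the $k$-sum into three regimes relative to the sidelength $2^{L(x)}$ of the optimizing cube: $k+L(x)<0$ (low), $0\le k+L(x)\le\cN$ (mid), $k+L(x)>\cN$ (high). The mid regime has at most $\cN+1$ indices, so H\"older in $k$ over this bounded index set is what yields the factor $\cN^{1/r-1/p}$; the low regime is handled by the Lipschitz regularity of $\zeta_k$ (the smoothing in $P_k$ at scales coarser than the cube), and the high regime is controlled by a complex interpolation between the $L^q$ bound $\eqref{Lqhyp}$ and the $L^\infty$ bound from Lemma~\ref{Linftylemma} (which uses $\eqref{excset}$ and $\eqref{Linftyhyp}$), producing geometric decay $2^{-n(1-q/p)\eps_0}$ with constant $\cB$. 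Optimizing $\cN\sim\log(3+\cB/A)$ balances mid against high. If you want to salvage your route, you would need to replace the abstract pointwise interpolation by an argument that, like the sharp-maximal splitting, bounds the number of effectively contributing scales by $O(\cN)$ at each point; a Calder\'on--Zygmund decomposition of the $f_k$ does not by itself furnish that count. (A secondary issue: your good-part estimate routes entirely through $B_0$, giving a contribution $B_0^q\lambda^{-p}$ to the weak bound, which is not dominated by $\cB^p=B_0^q(A\Gamma^{1/p}+B_1)^{p-q}$ when $A\Gamma^{1/p}+B_1<1$; the paper's complex interpolation between the $L^q$ and $L^\infty$ endpoints produces $\cB$ in one stroke and avoids this.)
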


 In some interesting
applications $A\ll \cB$ so that the logarithmic growth in
\eqref{concl} is helpful. The power of the logarithm is sharp (see
\cite{limcriv}, \cite{sz}, \cite{triebel} for a relevant
 counterexample and
\cite{SeTAMS}, \cite{CaSe} for positive results on
families of  translation invariant and pseudo-differential operators).

To prove Theorem \ref{czthm} we begin with a standard
$L^\infty$--bound. In what follows the notation
$\intslash_Q f$ will be used for  the average $|Q|^{-1}\int_Q f$.

\begin{lemma} \label{Linftylemma} Assuming \eqref{Lphyp}, \eqref{excset} and
\eqref{Linftyhyp}, the following statements hold true.

(i)
If $2^{-k}\le \diam(Q) \le 1$, then
\Be\label{fixedcubesmall} \intslash_Q |P_kT_k h|\, dy \le
C\big(A\Gamma^{1/p} \big(2^{k}\diam(Q)\big)^{-a/p}+B_1
\big(2^{k}\diam (Q)\big)^{-\eps} \big) \|h\|_\infty. \Ee

(ii) If $\diam(Q)\ge 1$, then \Be \label{fixedcubelarge} \intslash_Q
|P_kT_k h|\, dy \le C \big(A\Gamma^{1/p} 2^{-ka/p}+B_1
2^{-k\eps}\big) \|h\|_\infty. \Ee
\end{lemma}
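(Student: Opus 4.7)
My plan is to fix for each cube $Q$ the enlargement $\tQ$ concentric with $Q$ of side length $2\ell(Q)$ (so that $\diam(\tQ)\approx\diam(Q)$ and $2^k\diam(\tQ)\ge 2$ in both cases, allowing \eqref{Linftyhyp} to be applied to $\tQ$), and to split
\[
h\,=\,h_1+h_2,\qquad h_1\,:=\,h\,\mathbbm{1}_{\cE_{\tQ}},\qquad h_2\,:=\,h\,\mathbbm{1}_{\cE_{\tQ}^c}.
\]
The two pieces are then controlled by different means: $h_1$ via \eqref{Lphyp} together with the size of $\cE_{\tQ}$ from \eqref{excset}, and $h_2$ via the off-diagonal $L^1$ bound \eqref{Linftyhyp}.

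For $h_1$, H\"older's inequality on $Q$, the uniform bound $\|P_k\|_{L^p\to L^p}\le\|\zeta\|_1$, and \eqref{Lphyp} yield
\[
\intslash_Q|P_kT_kh_1|\,dy\,\le\,|Q|^{-1/p}\|P_kT_kh_1\|_p\,\le\,CA2^{-ka/p}|Q|^{-1/p}|\cE_{\tQ}|^{1/p}\|h\|_\infty.
\]
The bound \eqref{excset} then gives $|\cE_{\tQ}|\le C\Gamma|Q|^{1-a/d}$ in case (i) (since $|\tQ|\le 1$ in the relevant subcase) and $|\cE_{\tQ}|\le C\Gamma|Q|$ in case (ii), so substituting $|Q|\approx\diam(Q)^d$ produces the first summand $CA\Gamma^{1/p}(2^k\diam(Q))^{-a/p}\|h\|_\infty$ (respectively $CA\Gamma^{1/p}2^{-ka/p}\|h\|_\infty$).

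For $h_2$ I aim at the pointwise estimate $|P_kT_kh_2(x)|\le CB_1M_k\|h\|_\infty$ on $Q$, with $M_k:=(2^k\diam(Q))^{-\eps}$ in case (i) and $M_k:=2^{-k\eps}$ in case (ii). Writing $P_kT_kh_2(x)=\int\zeta_k(x-u)T_kh_2(u)\,du$ and splitting the $u$-integration at $\tQ$, the local region $u\in\tQ$ is immediate from \eqref{Linftyhyp} applied to $\tQ$: the support condition on $h_2$ forces the integration in $y$ to $\cE_{\tQ}^c$, so $|T_kh_2(u)|\le B_1M_k\|h\|_\infty$ for $u\in\tQ$, and the contribution is at most $\|\zeta_k\|_1B_1M_k\|h\|_\infty=CB_1M_k\|h\|_\infty$.

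The main technical obstacle is the tail $u\notin\tQ$, which I plan to handle via a dyadic decomposition of $\tQ^c$ into shells $\tQ_j\setminus\tQ_{j-1}$, with $\tQ_j:=2^j\tQ$. On each shell one has the Schwartz decay $|\zeta_k(x-u)|\le C_N2^{kd}(1+2^{k+j}\diam(Q))^{-N}$ for arbitrary $N$, while the factor $|T_kh_2(u)|$ is controlled by further splitting $h_2$ according to $\cE_{\tQ_j}$: the $\cE_{\tQ_j}^c$-component of $h_2$ is bounded pointwise by \eqref{Linftyhyp} applied to $\tQ_j$ (producing $B_1(2^{k+j}\diam(Q))^{-\eps}$ on the shell), while the $\cE_{\tQ_j}$-component is handled by an auxiliary H\"older--$L^p$ estimate using \eqref{Lphyp} together with $|\cE_{\tQ_j}|\le\Gamma\max\{|\tQ_j|^{1-a/d},|\tQ_j|\}$ from \eqref{excset}. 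Choosing $N$ sufficiently large relative to $\eps$ and $a/p$ yields a convergent geometric series in $j$; the first contribution sums to $\le CB_1M_k\|h\|_\infty$ (matching the shell-zero bound) and the second is absorbed into $CA\Gamma^{1/p}(2^k\diam(Q))^{-a/p}\|h\|_\infty$ (respectively the case-(ii) analog), which completes the proof.
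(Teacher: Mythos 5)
Your proof is correct and uses essentially the same ideas as the paper's: split $h$ along a cube-adapted exceptional set, control the piece on $\cE$ by H\"older together with \eqref{Lphyp} and \eqref{excset}, control the complementary piece by the pointwise kernel bound \eqref{Linftyhyp}, and absorb the Schwartz tail of $\zeta_k$ by a dyadic shell (dilated-cube) decomposition that re-invokes the exceptional-set bounds at the larger scales. The only organizational difference is that the paper first proves the averaged estimate for $T_k$ alone over an arbitrary cube and then incorporates $P_k$ by applying that ready-made bound to the concentric expansions $Q_m^*$, whereas you split $h$ first and re-run the $\cE_{\tQ_j}/\cE_{\tQ_j}^c$ split inside each shell --- a harmless reshuffling that yields the same estimates; incidentally the passage to $\tQ=2Q$ is unnecessary, since $2^k\diam Q\ge 1$ already holds in both cases, so \eqref{Linftyhyp} applies directly to $Q$.
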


\begin{proof}
We split $h=h\chi_{\cE_Q}+h\chi_{\bbR^d\setminus\cE_Q}$. By
H\"older's inequality, \eqref{Lphyp}, and
then~\eqref{excset},\footnote{The expression $A\ls B$ denotes $A\le
CB$, where the value of the positive constant $C$ will vary from
line to line.}
\begin{align*}
\intslash_Q \big|T_k [h\chi_{\cE_Q}]\big|\, dx &\le
|Q|^{-1/p}\Big(\int\big|T_k [h\chi\ci{\cE_Q}]\big|^p\, dx\Big)^{1/p}
 \\&\lc |Q|^{-1/p}   A 2^{-ka/p}\big\|h\chi\ci{\cE_Q}\big\|_p
\lc A 2^{-ka/p}|Q|^{-1/p} |\cE_Q|^{1/p} \|h\|_\infty
\\&\lc
A \Gamma^{1/p} 2^{-ka/p} \max\{\diam(Q)^{-a/p}, 1\} \|h\|_\infty\,.
\end{align*}
On the other hand, by \eqref{Linftyhyp},
\begin{align*}
\intslash_Q \big|T_k [h\chi\ci{\bbR^d\setminus\cE_Q}]\big|\, dx &\le
\sup_{x\in Q}\int_{\bbR^d\setminus\cE_Q}  |K_k(x,y)| h(y)\, dy
\\&\lc B_1 \max \big\{\big(2^k\diam (Q)\big)^{-\eps}, 2^{-k\eps}\big\}
\,\|h\|_\infty\, .
\end{align*}
A combination of these two bounds shows that
the stated estimates hold with  $P_kT_k$ replaced  by $T_k$.

We now use straightforward estimates to incorporate the operators $P_k$. In view of the rapid decay of $\zeta$ we have
$$
\intslash_Q |P_kT_k h(x)|\, dx \le C_N \intslash_Q \int
\frac{2^{kd}}{(1+2^k|x-w|)^N} |T_k h(w)|\,dw\, dx.
$$
Now for $m=0,1,2,\dots$ we let  $Q_m^*$  denote the cube parallel to
$Q$ with the same center, but with sidelength equal to $2^{m+1}$
times the sidelength of $Q$. Then the last estimate (with $N\gg d$)
implies
\begin{multline*}\intslash_Q |P_kT_k h(x)|\, dx \\
\le C_N'
\intslash_{Q_0^*} |T_k h(w)|\, dw + \sum_{m=1}^\infty \big(2^k
\diam(Q_m^*)\big)^{d-N} \intslash_{Q_m^*}|T_k h(w)| \,dw
\end{multline*}
The term corresponding to $m=0$ has already been estimated and,
also by the bounds above applied to $Q_m^*$,
  the $m$th term is
controlled  by
\begin{multline*}
2^{-m(N-d)} \big(2^k\diam(Q)\big)^{d-N} \left(\frac{A\Gamma^{1/p}
2^{-ma/p}}{ \big(2^k\diam(Q)\big)^{a/p}} +\frac{ B
2^{-m\eps}}{\big(2^k\diam (Q)\big)^{\eps}}\right) \|h\|_\infty\,
\end{multline*}
if $2^m \diam (Q)\le 1$, and by
\begin{equation*}
2^{-m(N-d)} \big(2^k\diam(Q)\big)^{d-N} \big(A\Gamma^{1/p} 2^{-ka/p}
+ B 2^{-k\eps}\big)\|h\|_\infty
\end{equation*}
if $2^m \diam (Q)> 1$. We sum in $m$ to obtain the claimed result.
\end{proof}

\begin{proof}[\bf Proof of Theorem \ref{czthm}]
We first note that the asserted inequality for $r=p$ follows by assumption \eqref{Lphyp}  and Fubini's theorem.
We prove the theorem for $r\le 1$ and the intermediate cases $1<r<p$ follow by interpolation.

By the monotone convergence theorem it suffices to prove
\eqref{concl} for all {\it finite} sequences  $F=\{f_k\}_{k\in
\bbN}$, {\it i.e.}, we may assume that $f_k=0$ for large $k$.

We use the Fefferman--Stein theorem \cite{FS} for the $\#$--maximal
operator. The left hand side of \eqref{concl} is then rewritten and
estimated as
\begin{align*}
&\Big\|\sum_k |2^{ka/p} P_kT_kf_k|^r \Big\|_{p/r}^{1/r}\,\lc
\\&\Big\|\sup_{Q:x\in Q} \intslash_Q
\Big|\sum_k |2^{ka/p} P_kT_kf_k(y)|^r
-\intslash_Q \sum_k |2^{ka/p} P_kT_kf_k(z)|^r\,dz\Big| dy \Big
\|_{L^{p/r}(dx)}^{1/r}
\\&\lc\Big\|\sup_{Q:x\in Q} \sum_k 2^{kar/p}
\intslash_Q \intslash_Q | P_kT_kf_k(y)-P_kT_kf_k(z)|^r \, dz \,dy
\Big\|_{L^{p/r}(dx)}^{1/r}.
\end{align*} In the last step we simply use
$|u^r-v^r|\le |u-v|^r$ for nonnegative  $u,v$ and $0<r\le 1$,
combined with the triangle inequality.

Note that the application of the Fefferman-Stein inequality is valid
because of our {\it a priori} assumption involving finite sums.

Given a sequence $f_k$ we can choose cubes $Q(x)$ depending
measurably on $x$  so that the supremum in $Q$ can be up to a factor
of two realized by the choice of $Q(x)$. This means that it suffices
to prove the inequality
\begin{multline}\label{Qxchoice}
\Big\| \sum_k 2^{kar/p}
\intslash_{Q(x)} \intslash_{Q(x)}
| P_kT_kf_k(y)-P_kT_kf_k(z)|^r \, dz \,dy
\Big\|_{L^{p/r}(dx)}^{1/r}
\\\le  CA
\Big[\log\big(3 +\frac{\cB}{A}\big)\Big]^{1/r- 1/p}
\Big(\sum_k\|f_k\|_p^p \Big)^{1/p}
\end{multline}
where  $C$ does not depend on the choice of $x\mapsto Q(x)$.
We define $L(x)$ to be the integer $L$ for which the sidelength of
$Q(x)$ belongs to $[2^L,2^{L+1})$.

Let $X=\{x: L(x) \le 0\}$. We shall first
estimate  the $L^{p/r}$ norm  over $X$ (the main and more
interesting part) and then provide  the bound on $L^{p/r}(\bbR^d\setminus X)$
separately.

Define  $$\cG_k h(x) = \Big(\intslash_{Q(x)} \intslash_{Q(x)} |
P_kT_kh(y)-P_kT_kh(z)|^r \, dz \,dy \Big)^{1/r}$$ so that the left
hand side of \eqref{Qxchoice}  is equal to $\|\sum_k 2^{ka r/p}
|\cG_k f_k|^r\|_{p/r}^{1/r}\,.$ Let $\cN$ be a positive integer (it
will later be chosen as $C\log\big(3 +\frac{\cB}{A}\big)$ with a
large  $C$). For $x\in X$ we  split the $k$--sum into three pieces
acting on $F=\{f_k\}$,
$$
\sum_k 2^{kar/p} |\cG_k f_k(x)|^r
=|\fS^{\text{low}}[F](x)|^r+|\fS^{\text{mid}}[F](x)|^r
+|\fS^{\text{high}}[F](x)|^r
$$
where
\begin{align*}
\fS^{\text{low}}[F](x) &= \Big(\sum_{k+L(x)<0} 2^{k ar/p} |\cG_k
f_k(x)|^r \Big)^{1/r}
\\
\fS^{\text{mid}}[F](x) &=
\Big(\sum_{0\le k+L(x)\le \cN} 2^{k ar/p}
|\cG_k f_k(x)|^r\Big)^{1/r}
\\
\fS^{\text{high}}[F](x) &= \Big( \sum_{k+L(x)>\cN} 2^{k ar /p}
|\cG_k f_k(x)|^r\Big)^{1/r}.
\end{align*}
We need to bound the $L^p$ norms of the three terms by the right
hand side of \eqref{Qxchoice}. The terms
 $\fS^{\text{low}}[F]$ and
 $\fS^{\text{mid}}[F]$ will  be estimated  by using just hypothesis
\eqref{Lphyp}.

To bound  $\fS^{\text{low}}[F]$ we first consider
the expression
\begin{multline*}P_kT_k f_k(y)-P_kT_k f_k(z) =\\
\int_0^1\int \inn {2^k(y-z)}{2^{kd}\nabla \zeta (2^k(z-w+s(y-z))} T_kf_k(w) dw\, ds.
\end{multline*}
For $y,z\in Q(x)$ we have $2^k|y-z|\lc 2^{k+L(x)}$, and by
H\"older's inequality and the  rapid decay of $\zeta$,
\begin{align*}
|\cG_k f_k(x)|&\le
\Big(\intslash_{Q(x)} \intslash_{Q(x)}
| P_kT_kf_k(y)-P_kT_kf_k(z)|^r \, dz \,dy\Big)^{1/r}
\\&\lc  2^{k+L(x)}
M_{HL}[ T_kf_k](x).
\end{align*}
Here $M_{HL}$ denotes the standard Hardy--Littlewood maximal
operator. Now, by H\"older's inequality with respect to the
$k$--summation,
$$\Big(\sum_{k+L(x)\le 0} |2^{ka/p}\cG_k f_k(x)|^r\Big)^{1/r}\lc
\Big(\sum_k | 2^{k a/p} M_{HL}[ T_kf_k](x)|^p\Big)^{1/p}.$$
Thus 
\begin{align}
\|\fS^{\text{low}}[F]\|_p
&\le \Big(\sum_k 2^{ka} \|M_{HL}[ T_kf_k]\|_p^p \Big)^{1/p}\notag
\\ \label{lowest}
&\lc \Big(\sum_k 2^{ka} \|T_kf_k\|_p^p \Big)^{1/p}
\lc A \Big(\sum_k\|f_k\|_p^p\Big)^{1/p}\,.
\end{align}

Next we  take care of $\fS^{\text{mid}}[F](x)$ which may
 often  be considered the
main term but is also  estimated using just \eqref{Lphyp}.
Now
$$|\cG_k f_k(x)|^r\le 2\intslash_{Q(x)} |P_k T_k f_k(y)|^r dy$$
and therefore
\begin{multline*}\sum_{0\le k+L(x)\le \cN} 2^{k ar/p}|\cG_k f_k(x)|^r
\\ \lc
\intslash_{Q(x)} \cN^{1-r/p}  \Big(\sum_{0\le k+L(x)\le \cN}
|2^{ka/p}P_k T_k f_k(y)|^p\Big)^{r/p} dy.\end{multline*} By
H\"older's inequality, this implies
$$|\fS^{\text{mid}}[F](x)|
\lc \cN^{1/r-1/p} M_{HL}\big[ \big(\sum_k |2^{ka/p}P_k T_k
f_k|^p\big)^{1/p}\big](x),$$ so that
\begin{align}
\|\fS^{\text{mid}}[F]\|_p &\lc \cN^{1/r-1/p}  \Big\|
 M_{HL}\big[ \big(\sum_k
|2^{ka/p}P_k T_k f_k|^p\big)^{1/p}\big]\Big\|_p \notag
\\
&\lc \cN^{1/r-1/p}
  \big(\sum_k 2^{ka}\|P_k T_k f_k\|_p^p\big)^{1/p}\,
\notag
\\
&\lc A\, \cN^{1/r-1/p}
  \big(\sum_k\|f_k\|_p^p\big)^{1/p}.
\label{middleest}
\end{align}

We now turn to the expression $\fS^{\text{high}}$ which we estimate
for $L(x)\le 0$. Again by H\"older's inequality,
\begin{align*}
\fS^{\text{high}}[F](x)&\le \Big(2\sum_{k> \cN-L(x)} 2^{kar/p}
\intslash_{Q(x)}|P_kT_k f_k(y)|^r dy\Big)^{1/r}
\\
&\le\Big(2\sum_{k> \cN-L(x)} 2^{kar/p}\Big( \intslash_{Q(x)}|P_kT_k
f_k(y)|\, dy\Big)^r \Big)^{1/r}\,.
\end{align*}
If $r<1$ then we  choose a small
$\delta>0$
and use H\"older's inequality with respect to the $k$ summation to get
\begin{align}\label{withoutr}
\fS^{\text{high}}[F](x)&\le  C(r,\delta) \sum_{k> \cN-L(x)}
2^{ka/p}2^{(k+L(x))\delta} \intslash_{Q(x)}|P_kT_k f_k(y)|\, dy
\end{align}
where
$$C(r,\delta)= 2^{1/r}
\Big(\sum_{k> \cN-L(x)} 2^{-(k+L(x)) \delta
\frac{r}{1-r}}\Big)^{1-r} \lc 2^{-\cN \delta r}(r\delta)^{r-1},$$ so
that $C(r,\delta)\lc (r\delta)^{r-1} $.

In order to estimate the expression \eqref{withoutr} it suffices to
bound the $L^p$--norm of
$$ \cT^{\text{lin}}[F](x) =
\sum_{k> \cN-L(x)} 2^{ka/p}2^{(k+L(x))\delta}
\intslash_{Q(x)}\om_k(x,y) P_kT_k f_k(y)\, dy $$ where $\om_k(x,y)$
are measurable functions satisfying $\sup_{x,y,k}|\om_k(x,y)|\le 1$,
with the constants in the estimates independent of the particular
choice of the $\om_k$. We now fix one such choice.

Write $n=k+L(x)$, so that $n> \cN$, and define, for $0\le \Re(z)\le
1$, \Be\label{Sndef} S^z_nF(x) =2^{(n-L(x))a(1-z)/q}
\intslash_{Q(x)}\om_{n-L(x)}(x,y) P_{n-L(x)} T_{n-L(x)}
f_{n-L(x)}(y)\, dy. \Ee Observe that \Be\label{Tlintheta}
\cT^{\text{lin}}[F](x) = \sum_{n>\cN} 2^{n\delta}
S^\theta_nF(x)\quad   \text{ for }\ \theta= 1-\frac qp\,. \Ee We
estimate the $L^p$ norm of $S^z_n F$  for $z=\theta$ by
interpolating between an $L^q$ bound for $\Re(z)=0$ and an
$L^\infty$ bound for $\Re(z)=1$.

For $z=i\tau$, $\tau\in \bbR$ we obtain
\begin{align*}
|S^{i\tau}_n F(x)|&\le \intslash_{Q(x)}\sup_k
2^{ka/q}|P_kT_kf_k(y)|\,dy
\\
&\le M_{HL}\big[
(\sum_k|2^{ka/q}P_kT_kf_k|^q\big)^{1/q}\big](x)
\end{align*}
and therefore, by the $L^q$ estimate for $M_{HL}$, Fubini, and assumption
\eqref{Lqhyp},
$$
\|S^{i\tau}_n F\|_q\lc
\Big(\sum_k2^{ka}\|P_kT_kf_k\|_q^q\Big)^{1/q} \lc B_0
\Big(\sum_k\|f_k\|_q^q\Big)^{1/q}.
$$
The $L^\infty$ estimate for $\Re(z)=1$ follows from Lemma
\ref{Linftylemma}; for $L(x)\le 0$, we get
\begin{align*}
|S_n^{1+i\tau} F(x)|&\le \intslash_{Q(x)} |P_{n-L(x)}  T_{n-L(x)}
 f_{n-L(x)} (y)|\,dy
\\
&\lc (A\Gamma^{1/p}2^{-na/p}+ B_1 2^{-n\eps}) \|f_{n-L(x)}\|_\infty
\end{align*}
and of course $\|f_{n-L(x)}\|_\infty\le \sup_k\|f_k\|_\infty$.
Interpolating the two bounds yields, \Be\label{Snthetaest}
\|S_n^{\theta} F\|_{L^p(X)}
 \lc 2^{-\eps_0 n(1-q/p)}
\cB \Big(\sum_k\|f_k\|^p_p\Big)^{1/p} \Ee with
$\eps_0:=\min\{a/p,\eps\}$ and $\cB$ as in \eqref{cBdef}. Choosing
$\delta=(1-q/p)\eps_0/2$, this yields
\begin{align*}\|\cT^{\text{lin}}[F]\|_{L^p(X)} &\lc \sum_{n>\cN} 2^{n\delta}\|
S^\theta_n F\|_{L^p(X)}
\\
&\lc \eps_0^{-1} (1-q/p)^{-1} \cB 2^{-\cN(1-q/p)\eps_0 /2}
\Big(\sum_k\|f_k\|^p_p\Big)^{1/p}
\end{align*}
and then, by suitably choosing $\omega_k$,
\begin{align*}\|\fS^{\text{high}} F\|_{L^p(X)} \lc \eps_0^{-2} (1-q/p)^{-2} \cB 2^{-\cN(1-q/p)\eps_0 /2}
\Big(\sum_k\|f_k\|^p_p\Big)^{1/p}.
\end{align*}

We combine the three bounds for $\fS^{\text{high}}$,
$\fS^{\text{mid}}$ and $\fS^{\text{low}}$ and get
\begin{multline*}
\Big\|\sum_k 2^{kar/p}|\cG_kf_k|^r\Big\|_{L^{p/r}(X)}^{1/r}
\\\le C_r \big(A \cN^{1/r-1/p}+ \eps_0^{-2}(1-q/p)^{-2} \cB 2^{-\cN(1-q/p)\eps_0 /2}\big)
\Big(\sum_k\|f_k\|^p_p\Big)^{1/p}
\end{multline*}
and choosing $\cN= C_{\text{large}}
\log(3+\cB/A)$ (with $C_{\text{large}}$ depending on $p$ and $q$) we obtain the bound \Be \Big\|\sum_k
2^{kar/p}|\cG_kf_k|^r\Big\|_{L^{p/r}(X)}^{1/r} \le C
A\Big[\log\big(3 +\frac{\cB}{A}\big)\Big]^{1/r-
1/p}\Big(\sum_k\|f_k\|^p_p\Big)^{1/p}. \Ee
\medskip

It remains to give the
estimation on
$\bbR^d\setminus X$ (the set where $L(x)>0$) which is similar in
spirit, but
more straightforward.
We first single out the terms for $k\le \cN$ and by an estimate
similar to the one for $\fS^{\text{mid}}$ above we get
\Be\label{primero}
\Big\|\sum_{k\le \cN} 2^{kar/p}|\cG_kf_k|^r\Big\|_{L^{p/r}}^{1/r}
\lc A \cN^{1/r-1/p} \Big(\sum_k\|f_k\|^p_p\Big)^{1/p}.\Ee
On the other hand, by assumption \eqref{Lqhyp}
$$ 2^{ka/q}\|\cG_kf_k\|_q \lc B_0 \|f_k\|_q$$
and by  \eqref{fixedcubelarge}
$$ \|\cG_kf_k\|_{L^\infty(\bbR^d\setminus X)} \lc (A\Gamma^{1/p}
2^{-ka/p}+B_1 2^{-k\eps})  \|f_k\|_\infty.$$
Thus, with $\eps_0=\min\{a/p,\eps\}$ we get  by interpolation,
$$2^{ka/p}\|\cG_kf_k\|_{L^p(\bbR^d\setminus X)} \lc 2^{-k\eps_0(1-q/p)} \cB
\|f_k\|_p.$$ By a straightforward application of H\"older's
inequality,
\begin{equation}\label{secondo}\Big\|\sum_{k>\cN} 2^{kar/p}|\cG_kf_k|^r\Big\|_{L^{p/r}}^{1/r}
\lc  \eps_0^{-1/r}(1-q/p)^{-1/r}2^{-\cN\eps_0(1-q/p)/2} \cB
\sup_k\|f_k\|_p,\end{equation}
which is slightly better than the $\ell^p(L^p)$ bound that we are
aiming for. Combining \eqref{primero} and \eqref{secondo}, choosing
$\cN$ as before, yields
$$\Big\|\sum_{k} 2^{kar/p}|\cG_kf_k|^r\Big\|_{L^{p/r}(\bbR^d\setminus X)}^{1/r}
\le A\Big[\log\big(3
+\frac{\cB}{A}\big)\Big]^{1/r-1/p}\Big(\sum_k\|f_k\|^p_p\Big)^{1/p}
$$
which concludes the proof.
\end{proof}

\section{Applications}\label{applsect}

\subsection*{Integrals over hypersurfaces}
Consider the example of spherical means. For $k\in\mathbb{N}$, let
$P_k$ be a Littlewood--Paley cutoff operator $\widetilde \Pi_k$
(localizing to frequencies of size $\approx 2^k$ as in the
introduction) such that $\widetilde \Pi_k\Pi_k=\Pi_k$. Take $T_k f =
\sigma*\widetilde \Pi_kf$ and $f_k=\Pi_k f$.
If $Q$
is a cube satisfying $2^{-k}\le \diam(Q)\le 1$, with center $x_Q$,
then the exceptional set $\cE_Q$ is the tubular neighborhood of the
unit sphere centered at $x_Q$, with width $C \diam (Q)$; if $\diam
(Q)>1$ we can simply choose the double cube.
 Then the
hypotheses of Theorem \ref{czthm} are easily verified with $a=d-1$,
$q=2$, any $p>2$, and with $A$, $B_0$, $B_1$, $\Gamma$ all
comparable. Then \eqref{spherical} is implied by Theorem
\ref{czthm}.

One can extend  this observation to more general averaging operators
over hypersurfaces  which are not necessarily translation invariant.
Let $\chi\in C^\infty_c(\bbR^d\setminus \{0\})$ and let
$(x,y)\mapsto \Phi(x,y)$ be a smooth function defined in a
neighborhood of $\supp\, \chi$ and assume that
$\nabla_x\Phi(x,y)\neq 0$ and $\nabla_y\Phi(x,y)\neq 0$. Let
$\delta$ be the Dirac measure on  the real line and define the
generalized Radon--transform $\cR$ as the integral operator with
Schwartz kernel $$K_{\cR} (x,y)= \chi(x,y)\delta(\Phi(x,y)).$$ As
shown in  \cite{SoSt} (\cf.  also \cite{H}),
regularity properties of $\cR$ are determined by
the rotational curvature
$$\kappa(x,y)= \det \begin{pmatrix} \Phi_{xy} &\Phi_x\\
\Phi_y&0\end{pmatrix}\,.$$
Strengthening the results in \cite{SoSt} slightly we obtain
\begin{corollary} \label{radonappl}
(i) Suppose that $\kappa(x,y)\neq 0$ on $\supp (\chi)$. Then
$\cR$ maps $F^p_{0,p}(\bbR^d) \to F^{p}_{\frac{d-1}p,r}(\bbR^d)$, for $2<p<\infty$,  $r>0$.

(ii)  Suppose that $\kappa(x,y)\neq 0$ vanishes only of finite order
  on $\supp \chi$, i.e. there is $n$ such that $\sum_{|\gamma|\le
  n}|\partial^\gamma_{y}\kappa(x,y)|\neq 0$, then there is
  $p_0(n,d)<\infty$ so that
$\cR$ maps $F^p_{0,p}(\bbR^d) \to F^{p}_{\frac{d-1}p,r}(\bbR^d)$, for
$p_0(n,d)<p<\infty$,  $r>0$.
\end{corollary}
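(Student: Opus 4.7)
The plan is to invoke Theorem \ref{czthm} with $a = d-1$ and $q = 2$, taking $T_k := \cR \circ \widetilde{\Pi}_k$, $P_k := \widetilde{\Pi}_k$, and $f_k := \Pi_k f$, where $\widetilde{\Pi}_k$ is a fattened Littlewood--Paley projection with $\widetilde{\Pi}_k\Pi_k = \Pi_k$. Since $\cR$ is a Fourier integral operator associated to a local canonical transformation in case~(i), $\Pi_k \cR \Pi_j$ has rapidly decaying operator norm when $|j-k|$ is large, and an almost-orthogonality argument reduces the desired estimate $\|\cR f\|_{F^p_{(d-1)/p,r}} \lesssim \|f\|_{F^p_{0,p}}$ to the conclusion \eqref{concl} of Theorem \ref{czthm}; the logarithmic factor there is absorbed because $A, B_0, B_1, \Gamma$ will turn out to be of comparable size.

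Next I would verify the four hypotheses. For \eqref{Lphyp} with $a/p = (d-1)/p$: in case (i) this is the sharp Sogge--Stein Sobolev bound for $\cR$; in case (ii) it is the analogous result for FIOs with finite-order degeneracy, valid for $p > p_0(n,d)$. For \eqref{Lqhyp} with $q = 2$ and $a/q = (d-1)/2$: in case (i) this is H\"ormander's $L^2$ theorem for nondegenerate FIOs of order $-(d-1)/2$. In case (ii) the $q=2$ bound may fail with the sharp exponent, so I would instead apply Theorem~\ref{czthm} with some $q_0 = q_0(n,d) \in (2, p_0(n,d))$ for which the $L^{q_0}$ bound survives the degeneracy.

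For \eqref{excset}: given a cube $Q$ of diameter $\ell$ centered at $x_Q$, take $\cE_Q := \{y \in \supp_y \chi : |\Phi(x_Q, y)| \le C_0 \ell\}$ when $\ell \le 1$ (and the full $y$-support otherwise). Since $\nabla_y\Phi \neq 0$ on $\supp \chi$, the coarea formula gives $|\cE_Q|\lesssim \ell \simeq |Q|^{1/d} = |Q|^{1 - a/d}$. For \eqref{Linftyhyp}, represent
$$K_k(x,y) = \chi(x,y) \int e^{i\tau\Phi(x,y)}\, \beta(2^{-k}\tau)\, d\tau,$$
with $\beta$ a smooth bump near $1$ encoding the frequency cutoff. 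For $x \in Q$ and $y \notin \cE_Q$, choosing $C_0$ larger than the $x$-Lipschitz constant of $\Phi$ on $\supp\chi$ forces $|\Phi(x,y)| \gtrsim \ell$; iterated integration by parts in $\tau$ then gives $|K_k(x,y)| \lesssim_N 2^k (2^k\ell)^{-N}$ for any $N$, and integrating in $y$ over the compact support yields the $(2^k\diam Q)^{-\eps}$ decay required.

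The main obstacle will be locating the correct threshold $p_0(n,d)$ (and the associated $q_0$) in case (ii): one needs finite-order vanishing of $\kappa$ to still permit a sharp $L^q$ Sobolev bound for sufficiently large $q$, and this must be matched against the largest $a$ compatible with \eqref{excset}. The nondegenerate case (i) is otherwise routine, and the point of the argument is precisely that once the off-diagonal estimate \eqref{Linftyhyp} is in hand, Theorem \ref{czthm} automatically upgrades the Sobolev bound to the finer Triebel--Lizorkin bound.
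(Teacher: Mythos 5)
Part (i) closely matches the paper's proof: decompose $\cR$ dyadically, discard the rapidly decaying off-diagonal pieces, and apply Theorem \ref{czthm} with $a = d-1$, $q = 2$, the tubular-neighborhood exceptional set $\cE_Q = \{y : |\Phi(x_Q,y)| \le C_0 \diam(Q)\}$, and kernel decay from integration by parts in $\tau$. This is correct, including the observation that $A$, $B_0$, $B_1$, $\Gamma$ are comparable so the logarithmic factor in \eqref{concl} is harmless.

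Part (ii) has a genuine gap. You propose to choose $q_0 = q_0(n,d) \in (2, p_0(n,d))$ ``for which the $L^{q_0}$ bound survives the degeneracy,'' but hypothesis \eqref{Lqhyp} requires the \emph{sharp} Sobolev decay $\|T_k\|_{L^{q_0}\to L^{q_0}} \lesssim 2^{-k(d-1)/q_0}$, and that is exactly what the finite-order vanishing of $\kappa$ destroys for exponents \emph{below} the Sogge--Stein threshold. The correct window is $q_0 \in (p_0(n,d), p)$, not $q_0 < p_0(n,d)$, and your proposal leaves unexplained what $p_0(n,d)$ is or why this window is nonempty. The paper resolves this via a different and more self-contained route: it performs a second dyadic decomposition in the size of the rotational curvature, replacing $\chi$ by $\chi\,\beta_1(2^\ell|\kappa|)$ to produce operators $\cR^\ell_k$ localized to $|\kappa|\approx 2^{-\ell}$, and then applies Theorem \ref{czthm} separately for each $\ell$. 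The $L^2$ operator norm of $\cR^\ell_k$ degrades like $2^{\ell M}2^{-k(d-1)/2}$ (Proposition 2.2 of \cite{SoSt}), while the $L^\infty$ norm improves like $2^{-\ell/n}$ thanks to a van der Corput sublevel set estimate supplied by the finite-type assumption. Interpolation gives \eqref{Lphyp} and \eqref{Lqhyp} for $p > q > 2Mn+1$ with constants $A = 2^{-\ell\eps(p)}$, $B_0 = 2^{-\ell\eps(q)}$, and $B_1$, $\Gamma$ independent of $\ell$, so the resulting Triebel--Lizorkin bounds for the pieces are summable in $\ell$. It is this $\kappa$-decomposition that simultaneously exhibits a workable $q_0$ and pins down $p_0(n,d)$, and it is the missing ingredient in your proposal.
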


The proof of (i) is essentially the same as for the spherical means. One
 decomposes $\cR=\sum_{k=0}^\infty \cR_k$
where for $k>0$ the Schwartz kernel of $\cR_k$ is given by
\Be\label{Rk}R_k(x,y)= \int \eta(2^{-k}|\tau|) \chi(x,y)
\,e^{i\tau\Phi(x,y)} d\tau \Ee with a suitable $\eta$ supported in
$(1/2,2)$. One may then write $$\cR=\sum_{k=0}^\infty \Pi_k\cR_k
\Pi_k +\sum_{k=0}^\infty E_k,$$
where $E_k$ is
negligible, {\it i.e.} mapping $L^p$ to any Sobolev space $L^p_N$
with norm $\le C_N 2^{-kN}$; this decomposition follows by an
integration by parts argument in \cite{H},  and uses only the
assumptions $\Phi_x\neq 0$ and $\Phi_y\neq 0$ (see also \S2 in
\cite{SeDuke93}
 for an exposition of this kind of argument).
To estimate the main operator $\sum_{k=1}^\infty \Pi_k\cR_k \Pi_k$
we use Theorem~\ref{czthm}, setting $P_k=\Pi_k$, $f_k=\Pi_kf$,
$T_k=\cR_k$,
and choose all parameters as in the example for the spherical means.
For the exceptional sets $\cE_Q$ we choose a tubular neighborhood of
width $C\diam(Q)$ of the surface $\{y: \Phi(x_Q,y)=0\}$.

For part (ii) one decomposes the operators $\cR$ according to the
size of $\kappa$, using a suitable cutoff function of the form
$\beta_1(2^\ell|\kappa(x,y)|)$ where $\beta_1$ is supported in
$(1/2,2)$. Let  $R_k^\ell$
be
defined as in \eqref{Rk} but with $\chi(x,y)$ replaced by
$\chi(x,y)\beta_1(2^\ell|\kappa(x,y)|)$. Then the proof of
Proposition 2.2 in \cite{SoSt} shows that the operators
 $\cR_k^\ell$ are bounded on $L^2$ with operator norm
$\lc 2^{\ell M}2^{-k \frac{d-1}{2}}$
(in fact with $M=5d/2+(d-1)/2$). By the finite type assumption on $\kappa$
(and a standard  sublevel set estimate related to van der Corput's lemma)
the operator  $\cR_k^\ell$  is  bounded  on $L^\infty $ with operator norm
$\lc 2^{-\ell/n}$. Hence for $p>q>(2Mn+1)$
hypothesis \eqref{Lphyp} and \eqref{Lqhyp} are  satisfied with
$A=2^{-\ell\eps(p)}$,
$B_0=2^{-\ell\eps(q)}$
for some $\eps(p)>0$, $\eps(q)>0$. We choose the exceptional set as in
part (i) and \eqref{excset}, $\eqref{Linftyhyp}$ hold as well with some
$B_1$, $\Gamma$  independent of $\ell$.

\subsection*{Fourier integral operators}
Another application concerns general  Fourier integral operators
associated to a canonical graph. Let $\chi\in C^\infty_0(\bbR^d)$,
let $a$ be a standard smooth symbol supported in $\{\xi:|\xi|\ge
1\}$. Let $$Sf (x) = \chi(x) \int a(x,\xi) \widehat f(\xi)
\,e^{i\phi(x,\xi)} d\xi$$ where $\phi$ is smooth in
$\bbR^{d}\setminus \{0\}$ and
  $\xi\mapsto \phi(x,\xi)$ is homogeneous of degree $1$. We assume
that  $\det \phi_{x\xi}''\neq 0$ on the support of the symbol. The
following statement sharpens the $L^p$ estimates of \cite{peral},
\cite{mi1}
for the wave equation and of \cite{sss} for more general Fourier integral
operators.
 One can use
general facts about Fourier integral operators \cite{H} to see that
it implies part (i) of Corollary~\ref{radonappl}.

\begin{corollary} \label{fioappl}
Let $d\ge 2$, $2<p<\infty$, $r>0$,
  and let  $a$ be  a standard symbol of order $-(d-1)(\frac
12-\frac 1p)$. Then   $S: F^p_{0,p}(\bbR^d) \to F^{p}_{0,r}(\bbR^d)$ is
bounded.
\end{corollary}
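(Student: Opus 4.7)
The plan is to apply Theorem~\ref{czthm} with $a=d-1$ and $q=2$, absorbing a normalising factor $2^{-k(d-1)/p}$ into the operators $T_k$ so that the weight $2^{kar/p}$ in~\eqref{concl} exactly cancels and the target $F^{p}_{0,r}$-norm emerges on the left. Fix $\eta\in C_c^\infty((1/2,2))$ with $\sum_{k\ge 0}\eta(2^{-k}|\cdot|)\equiv 1$ on $\{|\cdot|\ge 1\}$, let $\widetilde\Pi_k$ be a thickened Littlewood--Paley projection with $\widetilde\Pi_k\Pi_k=\Pi_k$, and set
$$S_k f(x)=\chi(x)\int \eta(2^{-k}|\xi|)\, a(x,\xi)\,\widehat f(\xi)\,e^{i\phi(x,\xi)}\,d\xi\quad (k\ge 1),$$
with an analogous low-frequency piece $S_0$. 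The standard kernel-support/integration-by-parts argument for non-degenerate FIOs (\cite{H}, see also~\cite{SeDuke93}) gives $S=\sum_k \widetilde\Pi_k S_k \widetilde\Pi_k + E$ with $E:L^p\to L^p_N$ for all $N$. With $P_k=\widetilde\Pi_k$, $f_k=\Pi_k f$, and $T_k=2^{-k(d-1)/p}S_k$, the left-hand side of~\eqref{concl} collapses to $\|Sf\|_{F^{p}_{0,r}}$ (cross-terms $\Pi_k \widetilde\Pi_k S_k \widetilde\Pi_k\Pi_j f$ for $|j-k|>1$ vanish by disjoint Fourier supports), while the right-hand side is $\|f\|_{F^{p}_{0,p}}$.

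\textbf{Verification of the $L^p$ and $L^2$ hypotheses.} Write $\alpha=(d-1)(\tfrac12-\tfrac1p)$. The Seeger--Sogge--Stein theorem~\cite{sss} (specifically the uniform frequency-piece bound that the cap decomposition produces) yields $\|S_k\|_{L^p\to L^p}\lesssim 1$ uniformly in $k$, hence $2^{k(d-1)/p}\|T_k\|_{L^p\to L^p}\lesssim 1$, verifying~\eqref{Lphyp} with $A=O(1)$. The classical $L^2$-bound for FIOs with canonical graph gives $\|S_k\|_{L^2\to L^2}\lesssim 2^{-k\alpha}$, and therefore
$$2^{k(d-1)/2}\|T_k\|_{L^2\to L^2}\lesssim 2^{k(d-1)/2-k(d-1)/p-k\alpha}=1,$$
verifying~\eqref{Lqhyp} with $B_0=O(1)$.

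\textbf{Exceptional sets and off-hypersurface kernel bound.} For a cube $Q$ centred at $x_Q$ with $2^k\diam(Q)\ge 1$, I take $\cE_Q$ to be the $C\diam(Q)$-tubular neighbourhood of the hypersurface
$$\Sigma_{x_Q}=\{\nabla_\xi\phi(x_Q,\xi):\xi\in\supp a(x_Q,\cdot)\},$$
intersected with a fixed compact ball containing $\supp\chi$. Since $\nabla_\xi\phi$ is homogeneous of degree $0$ in $\xi$ and $\det\phi''_{x\xi}\ne 0$, $\Sigma_{x_Q}$ is a smooth $(d-1)$-submanifold, and $|\cE_Q|\lesssim\diam(Q)=|Q|^{1/d}=|Q|^{1-a/d}$, verifying~\eqref{excset}. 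Stationary phase in the $(d-1)$ directions tangential to $\Sigma_x$, combined with integration by parts transverse to $\Sigma_x$ using $|\nabla_\xi\phi(x,\xi)-y|\gtrsim\diam(Q)$ for $x\in Q,\,y\notin\cE_Q$, gives the pointwise bound
$$|K_{S_k}(x,y)|\lesssim_N 2^{-k\alpha}\,2^{k(d+1)/2}\,\bigl(1+2^k\mathrm{dist}(y,\Sigma_x)\bigr)^{-N}.$$
Integrating in $y$ by slicing normally/tangentially to $\Sigma_x$ produces
$$\int_{\bbR^d\setminus\cE_Q}|K_{S_k}(x,y)|\,dy\;\lesssim\; 2^{-k\alpha}\,2^{k(d-1)/2}\,(2^k\diam(Q))^{-(N-1)}.$$
Multiplying by the normalisation $2^{-k(d-1)/p}$ in $T_k$ and using the identity $(d-1)/p+\alpha=(d-1)/2$, the prefactor collapses to $1$, leaving $(2^k\diam(Q))^{-(N-1)}$, which is $\le (2^k\diam(Q))^{-\eps}$ for any $\eps>0$ upon choosing $N$ large enough. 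This is~\eqref{Linftyhyp} with $B_1=O(1)$.

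\textbf{Main obstacle and conclusion.} The principal technical step is the uniform kernel estimate above: the stationary-phase expansion tangential to $\Sigma_x$ must be carried out uniformly as $x$ ranges over $Q$, and the precise matching of powers of $2^k$ across the $L^p$, $L^2$ and $L^\infty$ regimes is exactly what forces $a=d-1$ and $q=2$. With all of $A,B_0,B_1,\Gamma$ being $O(1)$, one has $\cB/A=O(1)$ and the logarithmic factor in~\eqref{concl} is harmless; Theorem~\ref{czthm} then yields $\|Sf\|_{F^{p}_{0,r}}\lesssim\|f\|_{F^{p}_{0,p}}$, which is the assertion of Corollary~\ref{fioappl}.
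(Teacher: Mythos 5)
Your outline follows the paper's high-level plan (dyadic decomposition of $S$, apply Theorem~\ref{czthm} with $a=d-1$ and $q=2$, feed in the $L^2$ and $L^p$ bounds on the frequency pieces). But the exceptional sets and the kernel estimate that drives \eqref{Linftyhyp} are \emph{not} the ones the paper uses, and your version contains a genuine gap. The paper follows \cite{sss} and takes $\cE_Q$ to be a \emph{union of $O(d_Q^{-(d-1)/2})$ plates} $\rho_\nu(Q)$, one for each unit vector $\xi_\nu$ in a maximal $\sqrt{d_Q}$-separated net, each plate having thickness $\sim d_Q$ in the direction $\xi_\nu$ and thickness $\sim d_Q^{1/2}$ in the $d-1$ orthogonal directions. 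This construction mirrors the angular (cap) decomposition of the kernel and is robust for an arbitrary non-degenerate phase. You instead take $\cE_Q$ to be a $C\,\diam(Q)$-tubular neighbourhood of the set $\Sigma_{x_Q}=\{\nabla_\xi\phi(x_Q,\xi)\}$, and you justify \eqref{Linftyhyp} by a claimed pointwise bound
$$|K_{S_k}(x,y)|\lesssim_N 2^{-k\alpha}2^{k(d+1)/2}\bigl(1+2^k\operatorname{dist}(y,\Sigma_x)\bigr)^{-N}$$
obtained by ``stationary phase in the $(d-1)$ directions tangential to $\Sigma_x$.''

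The problem is that this pointwise bound, and the slicing calculation you base on it, tacitly assume that $\Sigma_x$ is a smooth $(d-1)$-dimensional hypersurface with a non-degenerate second fundamental form. The hypothesis of Corollary~\ref{fioappl} is only that $\det\phi''_{x\xi}\ne 0$ (a canonical-graph condition), which says nothing about $\phi''_{\xi\xi}$. For instance if $\phi(x,\xi)=x\cdot\xi+\xi_d$, then $\det\phi''_{x\xi}=1$ but $\Sigma_x$ is a single point, $K_{S_k}(x,\cdot)$ concentrates at scale $2^{-k}$ around that point with sup-norm $\sim 2^{-k\alpha}2^{kd}$ (not $2^{-k\alpha}2^{k(d+1)/2}$), and the ``level sets of $\operatorname{dist}(\cdot,\Sigma_x)$ have measure $\sim 1$'' step in your slicing integral fails. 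More generally, when $\phi''_{\xi\xi}|_{\xi^\perp}$ drops rank, $\Sigma_x$ is lower-dimensional or has vanishing curvature and the tangential-stationary-phase step has no content. (In your Gaussian-curvature example of spherical means the two constructions give comparable sets and your bound does hold, so the error is invisible there.) The SSS plate construction avoids all of this because it never appeals to geometry of $\Sigma_x$ beyond the cap decomposition in $\xi$, and the paper simply cites \cite{sss} for the resulting integration-by-parts estimates. To repair your argument you would either need to add a non-degeneracy hypothesis on $\phi''_{\xi\xi}$ (which would weaken the Corollary) or replace the tube and the pointwise estimate by the plate decomposition and per-plate kernel estimates as in \cite{sss}.

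Everything else in your proposal — the normalisation $T_k=2^{-k(d-1)/p}S_k$, identifying $a=d-1$, $q=2$, invoking \cite{sss} for $\|S_k\|_{L^p\to L^p}=O(1)$ and the $L^2$ FIO bound for \eqref{Lqhyp}, and noting $\cB/A=O(1)$ so the logarithm is harmless — matches the paper's intent and is fine (modulo the minor point that $\Pi_k S$ picks up a bounded number of nearby shifts $\widetilde\Pi_j S_j\widetilde\Pi_j$ with $|j-k|\le C$ rather than exactly one, which is routine).
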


The statement is equivalent with the $F^p_{0,p} \to
F^{p}_{(d-1)/p,r}$ boundedness of a similar  Fourier integral
operator $T$ of order $-(d-1)/2$. We use the dyadic decomposition in
$\xi$ to split  $T=T_0+\sum_{k=1}^\infty T_k$ where $T_0$ is
smoothing to arbitrary order. Exceptional sets are also constructed
as in \cite{sss}. Given a cube $Q$ with center $x_Q$ and diameter
$d_Q\le 1$ one  chooses a maximal $\sqrt{d_Q}$--separated set of
unit vectors $\xi_\nu$, thus this set has  cardinality
$O(d_Q^{-(d-1)/2})$. For each $\nu$ consider the rectangle let
$\pi_\nu$ be the orthogonal projection to the hyperplane
perpendicular to $\xi_\nu$. Form for large $C$ the rectangle
$\rho_\nu(Q)$ consisting of $y$ for which
$|\inn{y-\phi_{\xi}(x_Q,\xi_\nu)}{\xi_\nu}|\le Cd_Q$ and
$|\pi_\nu(y-\phi_{\xi}(x_Q,\xi_\nu))|\le Cd_Q^{1/2}$. The
exceptional set $\cE_Q$ for $|Q|<1$ is then defined
 to be the union of the $\rho_\nu(Q)$ and has measure $O(|Q|^{-1/d})$.
We refer to \cite{sss} for the arguments proving
$\|T_k\|_{L^p\to L^p}\lc 2^{-k(d-1)/p}$, $2<p<\infty$   and the integration by
parts arguments leading to \eqref{Linftyhyp}.

\subsection*{ Strongly singular integrals }
Define  the convolution operator $S^{b,\gamma}$ by
 $$\widehat {S^{b,\gamma}f}(\xi)=
\frac{\exp(i|\xi|^\gamma)}{(1+|\xi|^2)^{b/2}}\widehat
f(\xi)$$
We assume
$0<\gamma<1$ and $1<p<\infty$. The classical result
\cite{FS}
states that $S^{b,\gamma}$ is bounded on $L^p(\bbR^d)$ if
and only if $b\ge \gamma d|1/2-1/p|$.

Theorem \ref{czthm} can be used to upgrade the endpoint version to
\begin{corollary} For
$2<p<\infty$,
\label{strsing}$$S^{b,\gamma}:F^p_{0,p}\to F^p_{0,r}, \qquad b=b(\gamma)= \gamma d(1/2-1/p), \quad
r>0.$$
\end{corollary}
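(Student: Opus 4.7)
The plan is to reduce to a statement directly amenable to Theorem~\ref{czthm}. Set $a=\gamma d$, which lies in $(0,d)$ since $0<\gamma<1$, and put $\tilde b=b+a/p$. Since the Bessel potential $(I-\Delta)^{-a/(2p)}$ is an isomorphism $F^p_{s,q}\to F^p_{s+a/p,q}$ for every $s$ and $q$, and since the multiplier identity $(1+|\xi|^2)^{-a/(2p)}(1+|\xi|^2)^{-b/2}e^{i|\xi|^\gamma}=(1+|\xi|^2)^{-\tilde b/2}e^{i|\xi|^\gamma}$ gives $S^{\tilde b,\gamma}=(I-\Delta)^{-a/(2p)}S^{b,\gamma}$, the corollary is equivalent to proving
\[
  S^{\tilde b,\gamma}: F^p_{0,p}(\bbR^d)\to F^p_{a/p,r}(\bbR^d).
\]
The low-frequency part of $S^{\tilde b,\gamma}$ is easily handled by standard arguments, so it suffices to bound the high-frequency sum $\sum_{k\ge 1} T_k$, where $T_k$ is the Fourier multiplier operator with symbol $\eta(2^{-k}|\xi|)(1+|\xi|^2)^{-\tilde b/2}e^{i|\xi|^\gamma}$ for a fixed $\eta\in C_c^\infty(1/2,2)$. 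I would apply Theorem~\ref{czthm} with these $T_k$, with $P_k=\tilde\Pi_k$ a fattened Littlewood--Paley projector satisfying $\tilde\Pi_k\Pi_k=\Pi_k$, with $f_k=\Pi_k f$, and with an auxiliary parameter $q\in(2,p)$.

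The $L^p$ and $L^q$ bounds follow from the classical endpoint boundedness of $S^{b(s),\gamma}$ on $L^s$ with $b(s)=\gamma d|1/2-1/s|$. Factoring $T_k$ as the composition of the $k$th dyadic piece of $S^{b(s),\gamma}$ with the Mikhlin multiplier $(1+|\xi|^2)^{-(\tilde b-b(s))/2}$, which on the support $|\xi|\sim 2^k$ is of size $2^{-k(\tilde b-b(s))}$, yields $\|T_k\|_{L^s\to L^s}\lc 2^{-k(\tilde b-b(s))}$. For $s=p$ this exponent equals $a/p$, verifying \eqref{Lphyp}; for $s=q$ the choice $a=\gamma d$ gives $\tilde b-b(q)=\gamma d(1/q-1/p)+a/p=a/q$, verifying \eqref{Lqhyp}.

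For the exceptional sets, I would put $\cE_Q=B(x_Q,2(\diam Q)^{1-\gamma})$ when $\diam Q\le 1$ and $\cE_Q=2Q$ otherwise, so that $|\cE_Q|\lc\max\{|Q|^{1-a/d},|Q|\}$ in either case, yielding \eqref{excset}. For the kernel tail bound \eqref{Linftyhyp}, the gradient of the phase $\Phi(\xi)=(x-y)\cdot\xi+|\xi|^\gamma$ is $(x-y)+\gamma|\xi|^{\gamma-1}\hat\xi$, which has magnitude $\sim|x-y|$ as soon as $|x-y|\gg 2^{-k(1-\gamma)}$. For $x\in Q$ and $y\notin\cE_Q$ the hypothesis $2^k\diam Q\ge 1$ forces $(\diam Q)^{1-\gamma}\ge 2^{-k(1-\gamma)}$, so that $|x-y|\ge (\diam Q)^{1-\gamma}\ge 2^{-k(1-\gamma)}$ and iterated non-stationary phase produces
\[
  |K_k(x-y)|\le C_N\,2^{kd-k\tilde b}(2^k|x-y|)^{-N}
\]
for every $N$. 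Integrating in $y$ over $\bbR^d\setminus\cE_Q$ with $N$ chosen just above $d$ yields the required decay $\lc\max\{(2^k\diam Q)^{-\eps},2^{-k\eps}\}$ for a suitable $\eps>0$.

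The main technical step is the identification of the critical-point scale $|x-y|\sim 2^{-k(1-\gamma)}$ of the oscillatory integral defining $K_k$; once this is in hand, the choice $a=\gamma d$ is forced by the requirement that the ball of radius $(\diam Q)^{1-\gamma}$ have admissible measure $|Q|^{1-a/d}$, and remarkably this same $a$ is exactly what is needed for the $L^q$ exponent to line up as $a/q$. Theorem~\ref{czthm} then delivers the desired $F^p_{a/p,r}$ estimate for $S^{\tilde b,\gamma}$, completing the proof.
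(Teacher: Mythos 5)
Your proposal is correct and follows essentially the same route as the paper: conjugating by the Bessel potential $(I-\Delta)^{-\gamma d/(2p)}$ to reduce to the operator with symbol $(1+|\xi|^2)^{-\gamma d/4}e^{i|\xi|^\gamma}$, taking $a=\gamma d$, choosing exceptional sets of diameter $\sim(\diam Q)^{1-\gamma}$ for small cubes, and verifying \eqref{Lphyp}, \eqref{Lqhyp}, \eqref{Linftyhyp} by combining the Fefferman--Stein/Miyachi endpoint $L^s$ bound for $S^{b(s),\gamma}$ with Mikhlin on the dyadic pieces and non-stationary phase for the kernel tail. The paper gives only a two-line sketch citing \cite{FS} and \cite{mi2}; you have filled in precisely the computations those references contain.
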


To prove it we define  $\widehat {T^\gamma
f}(\xi)=(1+|\xi|^2)^{-\frac{\gamma d}{2p}}\widehat
{S^{b(\gamma),\gamma}f}(\xi)$. For $\diam(Q)<1$ we choose for the
exceptional set $\cE_Q$ the cube with the same center but diameter
$C (\diam (Q))^{1-\gamma}$, for large $C$. Then the verification of
the hypotheses with $a=\gamma d$ is done using the arguments in
\cite{FS} or \cite{mi2}.


\medskip

\noi{\it Remarks.}
(i) For the range
 $2\le p<s$, it is known that
the operator $S^{b(\gamma),\gamma}$  is not  bounded on $F^p_{0,s}$
(see~\cite{chse}).

(ii)
There are also corresponding results for the range $\gamma>1$
which improve on  the results in \cite{mi2},
but they do not fit precisely our setup of Theorem \ref{czthm}
(\cf. \cite {rs} for the corresponding  smoothing space time estimate).

\subsection*{Integrals over curves}

We consider the generalized Radon transform associated to curves
given by the equations $\Phi_i(x,y)=0$, $i=1,\dots, d-1$, where
the $\nabla_x\Phi_i$ are linearly  independent and
the $\nabla_y\Phi_i$ are linearly  independent, for
$(x,y)$ in a neighborhood $\cU=X\times Y$ of the  support of
a $C^\infty_c$ function $\chi$.
For simplicity (and without loss of generality) we assume that
$\Phi_i(x,y):=S^i(x,y_d)-y_i$ for $i=1,\dots,d-1$, and $\nabla_x S^i$ are linearly independent.

An important model  case arises when $S^i(x,y_d)=
x_i+(x_d-y_d)^{d+1-i}$ ({\it i.e.} for convolution with arclength
measure on the curve $(t^d,t^{d-1},\dots,t)$, for a compact
$t$-interval). The complete sharp $L^p$--Sobolev estimates
for $2<p<\infty$ are unknown in dimension $d\ge 3$. However in three
dimensions the sharp estimates are known for some range of
large $p$ (see \cite{pr-se}),
and this result is strongly related to deep
 questions on  Wolff's inequality for decompositions
 of cone multipliers \cite{Wolff1}.
%
A variable coefficient generalization of the result in \cite{pr-se}
is in \cite{pr-se3}. To discuss and apply the latter result we now
let $\delta$ be the Dirac measure on  $\bbR^{d-1}$ and define the
generalized Radon transform
$\cR$
as the operator with Schwartz kernel
$$\cK (x,y)= \chi(x,y)\delta(\vec\Phi(x,y)).$$ Again we shall also
consider the dyadic pieces $\cR_k$  with Schwartz kernel
\Be\label{Rkcurves}R_k(x,y)= \int \beta(2^{-k}|\tau|) \chi(x,y)
e^{i\tau\cdot\vec\Phi(x,y)} d\tau \Ee The analogue of the rotational
curvature now depends on $\tau$; we define it as a homogeneous of
degree zero function and, for $|\tau|=1$, set
$$\kappa(x,y,\tau)= \det \begin{pmatrix}\tau\cdot\vec \Phi_{xy} &\vec \Phi_x\\
\vec\Phi_y&0\end{pmatrix}=\sum_{i=1}^{d-1} \tau_i \det\begin{pmatrix}
S^i_{xy_d}&S^1_x&\cdots&S^{d-1}_x \end{pmatrix}.$$
Note that for $d\ge 3$ there are always directions where
$\kappa(x,y,\tau)$ vanishes.

In \cite{pr-se3} we consider the case $d=3$ and refer to this paper
for further discussion. Let $\cM=\{(x,y)\in\cU:
\vec\Phi(x,y)=0\}$
and let
$N^*\cM$ be the conormal bundle. We assume that $(N^*\cM)'$ is a
folding
canonical relation  and satisfies  an additional  curvature
condition.
To
describe the latter one  consider the fold surface
 $$\cL = \{(x, \tau\cdot\vec\Phi_x(x,y), y,
-\tau\cdot\vec\Phi_y(x,y)) : \vec\Phi(x,y)=0,\,\,
\kappa(x,y,\tau)=0\},$$
and assume that the
projection $\cL\to X$  has surjective differential. Thus for any
fixed~$x$ the set $\Sigma_x= \{\xi\in \bbR^3 : (x,\xi,y,\eta)\in
\mathcal L \text{ for some $(y,\eta)$}\}$ is a two-dimensional conic
hypersurface, and  the additional curvature assumption is  that
$\Sigma_x$  has one nonvanishing principal curvature everywhere
(see \cite{gs}, \cite{pr-se3} for further discussion). For
$d=3$ this covers perturbation of the translation invariant model
case.

Fix $\ell$ and, for $k> 3\ell$, define
\begin{align*}R_k^\ell(x,y)= \int \eta(2^{-k}|\tau|) \chi(x,y)
\widetilde \beta_1 (2^\ell (\kappa(x,y,\tfrac{\tau}{|\tau|}))
\,e^{i\tau\cdot\vec\Phi(x,y)} d\tau
\end{align*}
where $\widetilde \beta_1$ is supported in $\{\xi: C^{-1}\le
|\xi|\le C\}$ for large $C$, and, for $k=3\ell$, define
$R_k^\ell(x,y)$ in the same way but with $\beta_1$
replaced
 by
$\beta_0$, a smooth cutoff function which is equal to $1$ in a
$C$-neighborhood of the origin. Let $\cR_k^\ell $ be the operator
with Schwartz kernel $R^\ell_k$. We then have to estimate the $L^p$
operator norm for
$$\cR^\ell:=\sum_{k\ge 3\ell}\cR^{\ell}_k,$$ for each $\ell>0$.

In \cite{pr-se3} it is  shown, based on the
previously
mentioned Wolff inequality, that under the above assumptions
$$\big\|\cR^{\ell}_k\big\|_{L^p\to L^p} \lc C(\epsilon_\circ, p) 2^{-k/p}
2^{-\ell(1-\epsilon_\circ)/p}, \qquad p>p_W. $$
Here $(p_W,\infty)$ is the range of Wolff's inequality
(in \cite{Wolff1}  $p_W=74$, but this has been improved since).
Standard $L^2$ estimates (see \cite{PhSt},
\cite{phongsurvey}) give that for $k\ge 3\ell$ the operators
$\cR_k^\ell$ are bounded on $L^2$ with norm
$O(2^{(\ell-k)/2})$.
By interpolation,
$$\big\|\cR^{\ell}_k\big\|_{L^p\to L^p} \lc 2^{-\frac kp}
2^{-\ell\epsilon(p)} \text{ with } \epsilon(p)>0 \text{ for }
 p>(p_W+2)/2.
$$
We claim that this yields  the boundedness result
\begin{corollary}\label{curveresult}
The operator $\cR$
maps $F^p_{0,p}(\bbR^3)$ to $F^p_{\frac1p,r} (\bbR^3)$, for
 $\frac{p_W+2}{2}<p<\infty$.
\end{corollary}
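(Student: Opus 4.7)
The strategy is to decompose $\cR$ both dyadically in frequency and according to the size of $\kappa$, and to apply Theorem \ref{czthm} separately to each $\ell$--piece $\cR^\ell$. The key feature is that the exponential gain $2^{-\ell\eps(p)}$ coming from the interpolated $L^p$ estimate cited above will be slowed only by the at most polynomial $\ell$--growth in the log--factor of \eqref{concl}, so that the sum in $\ell$ converges.

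\medskip

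A non-stationary phase argument in $\tau$ (see \cite{H} and \cite{SeDuke93}) reduces the estimation of $\cR^\ell$ to that of $\sum_{k\ge 3\ell}\Pi_k\cR_k^\ell\Pi_k$, modulo an infinitely smoothing remainder; here $\Pi_k,\widetilde\Pi_k$ are as in the spherical-means paragraph and satisfy $\widetilde\Pi_k\Pi_k=\Pi_k$. By the Littlewood--Paley characterization of the Triebel--Lizorkin norm, it then suffices to prove that for each $\ell\ge 1$,
\begin{equation*}
\Big\|\Big(\sum_{k\ge 3\ell}2^{kr/p}\bigl|\Pi_k\cR_k^\ell\Pi_k f\bigr|^r\Big)^{1/r}\Big\|_p\lc 2^{-\ell\eps(p)/2}\|f\|_{F^p_{0,p}},
\end{equation*}
since summing in $\ell$ will then give the assertion; the contribution $\ell=0$, where $\kappa$ is bounded away from zero, is handled exactly as in the proof of Corollary \ref{radonappl}(i).

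\medskip

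Fix $\ell\ge 1$ and apply Theorem \ref{czthm} with $d=3$, $a=1$, $P_k=\Pi_k$, $f_k=\Pi_k f$, and $T_k=\cR_k^\ell$ (taken to be zero for $k<3\ell$); choose any $q\in((p_W+2)/2,\,p)$. The bound $\|\cR_k^\ell\|_{L^p\to L^p}\lc 2^{-k/p}2^{-\ell\eps(p)}$ is precisely \eqref{Lphyp} with $A=C\,2^{-\ell\eps(p)}$, and the same bound at the exponent $q$ yields \eqref{Lqhyp} with $B_0=C\,2^{-\ell\eps(q)}$. Take $\cE_Q$ to be the tubular neighbourhood of the curve $\{y:\vec\Phi(x_Q,y)=0\}$ of width $C\,2^\ell\diam(Q)$; since this curve is one-dimensional in $\bbR^3$, one has $|\cE_Q|\lc 2^{2\ell}\diam(Q)^2\lc 2^{2\ell}|Q|^{1-a/d}$, which verifies \eqref{excset} with $\Gamma\lc 2^{2\ell}$. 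For $y\notin\cE_Q$ one has $|\vec\Phi(x_Q,y)|\gc 2^\ell\diam(Q)$, which is exactly what is needed for repeated integration by parts in $\tau$ in \eqref{Rkcurves} to yield the decay \eqref{Linftyhyp} with some $\eps>0$ and an $\ell$--independent $B_1$; see the obstacle remark below. With these parameters $\cB\lc 2^{c_1\ell}$ and $\cB/A\lc 2^{c_2\ell}$ for some real $c_1,c_2$, so Theorem \ref{czthm} produces
\begin{equation*}
\Big\|\Big(\sum_k 2^{kr/p}\bigl|\Pi_k\cR_k^\ell\Pi_k f\bigr|^r\Big)^{1/r}\Big\|_p\lc 2^{-\ell\eps(p)}\,(1+\ell)^{1/r-1/p}\|f\|_{F^p_{0,p}},
\end{equation*}
which is summable in $\ell$.

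\medskip

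The main obstacle is the verification of \eqref{Linftyhyp} uniformly in $\ell$. The cutoff $\widetilde\beta_1\bigl(2^\ell\kappa(x,y,\tau/|\tau|)\bigr)$ has $\tau$--derivatives of size $2^{\ell-k}$ in the region $|\tau|\sim 2^k$, so repeated integration by parts in the direction $\vec\Phi(x_Q,y)/|\vec\Phi(x_Q,y)|^2$ produces factors $(2^{k-\ell}|\vec\Phi(x_Q,y)|)^{-1}$ rather than $(2^k|\vec\Phi(x_Q,y)|)^{-1}$. The enlargement of $\cE_Q$ by the factor $2^\ell$ is exactly what is needed to convert this into the required $(2^k\diam(Q))^{-N}$ decay; the resulting polynomial growth of $\Gamma$ in $2^\ell$ enters \eqref{concl} only through the logarithm $\log(\cB/A)$, whereas the dominant factor $A=2^{-\ell\eps(p)}$ decays exponentially, so the sum in $\ell$ converges and the corollary follows.
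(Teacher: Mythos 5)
Your proposal follows exactly the paper's strategy: decompose according to the size of the rotational curvature $\kappa$, apply Theorem~\ref{czthm} to each $\ell$--piece with $a=1$, $d=3$, exceptional sets given by $2^\ell$--dilated tubes around the curve $\{y:\vec\Phi(x_Q,y)=0\}$, and sum over $\ell$ using the fact that $A=2^{-\ell\eps(p)}$ decays exponentially while the extra growth from $\Gamma,B_1$ enters only through the logarithm. Two small inaccuracies, neither fatal: first, $B_1$ is not $\ell$--independent as you initially claim; from the kernel bound $|R_k^\ell(x,y)|\lc 2^{2k}(1+2^{k-\ell}|y'-\vec S(x_Q,y_3)|)^{-N}$ and integration over the complement of your tube one gets $B_1\lc 2^{2\ell}$ (the paper records $\Gamma\lc 2^{3\ell}$, $B_1\lc 2^{2\ell}$), but as you observe in your obstacle remark this only affects $\log(3+\cB/A)\lc 1+\ell$ and does not hurt the $\ell$--sum. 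Second, the paper is slightly more careful in the frequency-localization step: since $\Pi_k\cR_k^\ell\Pi_{k''}$ is only negligible when $|k-k''|$ exceeds a fixed $C_0$, one must really run the argument with $P_k=\Pi_{k+s_1}$, $f_k=\Pi_{k+s_2}f$ for all bounded shifts $|s_1|,|s_2|\le C_0$ (inequality~\eqref{essentialineq}); you wrote the diagonal case $s_1=s_2=0$ only, which omits this finite-overlap bookkeeping but is not a substantive gap.
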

To see this we use the assumption that $\nabla_xS^i$ are linearly independent and thus by integration by parts one can find a constant $C_0$
depending on $\vec S$ so that
\begin{multline*}\|\Pi_k \cR^\ell_{k'} \Pi_{k''}\|_{L^p\to L^p}
\le C_N \min\{ 2^{-kN},  2^{-k'N},  2^{-k''N}\} \\ \text{ provided that }
\max\{|k-k'|, |k'-k''|\}\ge C_0, \quad k'\ge 3\ell.
\end{multline*}
Straightforward arguments (such as
 those
used for the error terms in the proof of Corollary \ref{radonappl})
reduce matters to the inequality \Be\label{essentialineq}
\Big\|\Big(\sum_{k>0} |2^{k/p}\Pi_{k+s_1} \cR^\ell_k
\Pi_{k+s_2}f|^r\Big)^{1/r}\Big\|_p \lc 2^{-\ell\epsilon'(p)}
\Big\|\Big(\sum_{k>0} |\Pi_{k+s_2}f|^p\Big)^{1/p}\Big\|_p\quad \Ee
with $\epsilon'(p)>0$ for $p>(p_W+2)/2$. Here $|s_1|\le C_0$ and
$|s_2|\le C_0$. Indeed we apply, for fixed $\ell$, Theorem
\ref{czthm} with $P_k=\Pi_{k+s_1}$, $f_k=\Pi_{k+s_2}f$, and
$T_k=\cR_k^\ell$ if $k\ge 3\ell$ (and $T_k=0$ otherwise). For
$p>q>(p_W+2)/2$ assumption \eqref{Lphyp} holds with $A\lc
2^{-\ell\epsilon(p)}$ and assumption \eqref{Lqhyp} holds with
$B_1\lc 2^{-\ell\epsilon(q)}$. We check assumption
\eqref{Linftyhyp}. By an integration by parts argument we derive the
crude bound
$$|R_k^\ell(x,y)|\le C_N \frac{2^{2k}}{(1+2^{k-\ell} |y'-\vec S(x_Q,y_3)|
)^N}\,.
$$
Now for a
given cube $Q$ with center
$x_Q$ we let
$$\cE_Q:=\{y: |y'-\vec S(x_Q,y_3)|\le C 2^\ell\diam(Q)\}$$
if $\diam(Q)\le 1$. If $\diam (Q)\ge 1$ then we let $\cE_Q$ be a ball
 of diameter $C2^\ell\diam(Q)$ centered at $x_Q$. Clearly
assumptions \eqref{excset} and \eqref{Linftyhyp} are  satisfied
with $\Gamma \lc 2^{3\ell}$ and
$B_1\lc 2^{2\ell}$. By Theorem  \ref{czthm}
$$\Big\|\Big(\sum_{k\ge 3\ell}|2^{k/p}P_k\cR^\ell_k  f_k|^r\Big)^{1/r}\Big\|_p
\lc (1+\ell)2^{-\epsilon'(p)\ell} \Big(\sum_k\|f_k\|_p^p\Big)^{1/p},
\,\,\, p>\frac{p_W+2}2,
$$
which concludes the proof of \eqref{essentialineq} and yields
$$\big\|\cR^\ell f\big\|\ci{F^p_{\frac{1}{p},r}}
\lc (1+\ell)2^{-\eps(p)\ell} \|f\|\ci{F^p_{0,p}}.
$$
Corollary \ref{curveresult} follows by summation in $\ell\ge 0.$

\medskip \noi{\it Remark.}
A similar strengthening, with a similar argument,
 applies to the restricted X--ray transform model in \cite{pr-se2}.

\end{document}